\let\Im\relax
\DeclareMathOperator{\Im}{Im}
\DeclareMathOperator{\hyp}{\mu_{{\it{X}}}^{hyp}}
\DeclareMathOperator{\hypxd}{\mu_{{\it{X^d}}}^{hyp}}
\DeclareMathOperator{\canxd}{\mu_{{\it{X^d}}}^{can}}
\DeclareMathOperator{\cansymx}{\mu_{{\it{\mathrm{Sym}^d(X)}}}^{can}}
\DeclareMathOperator{\canvol}{\mu_{{\it{X^d}},vol}^{can}}
\DeclareMathOperator{\shypvol}{\mu_{{\it{X^d}},vol}^{shyp}}
\DeclareMathOperator{\shypxd}{\mu_{{\it{X^d}}}^{shyp}}
\DeclareMathOperator{\can}{\mu_{{\it{X}}}^{can}}
\DeclareMathOperator{\shyp}{\mu^{shyp}_{{\it{X}}}}
\DeclareMathOperator{\bx}{\mathcal{B}_{{\it{X}}}}
\DeclareMathOperator{\bxn}{\mathcal{B}_{{\it{X_N}}}}
\DeclareMathOperator{\bxd}{\mathcal{B}_{{\it{X^d}}}}
\DeclareMathOperator{\xd}{{\it{X^{d}}}}
\DeclareMathOperator{\rx}{{\it{r_{_X}}}}
\DeclareMathOperator{\rxn}{{\it{r_{_{X_N}}}}}
\DeclareMathOperator{\symx}{{\it{{\mathrm{Sym}^{d}(X)}}}}
\DeclareMathOperator{\vx}{\mathrm{vol_{\mathrm{hyp}}}}
\newtheorem{thm}{Theorem}[section]
\newtheorem{lem}[thm]{Lemma}
\newtheorem{prop}[thm]{Proposition}
\newtheorem{cor}[thm]{Corollary}
\theoremstyle{definition}
\newtheorem{rem}[thm]{Remark}
\numberwithin{equation}{section}
\begin{document}

\title[K\"ahler metrics over $\symx$]{On the K\"ahler metrics over $\symx$}

\author[A. Aryasomayajula]{Anilatmaja Aryasomayajula}

\address{Department of Mathematics,University of Hyderabad, Prof. C.~R.~Rao Road, 
Hyderabad 500046, India}

\email{anilatmaja@gmail.com}

\author[I. Biswas]{Indranil Biswas}

\address{School of Mathematics, Tata Institute of Fundamental
Research, Homi Bhabha Road, Bombay 400005, India}

\email{indranil@math.tifr.res.in}

\author[A. S. Morye]{Archana S. Morye}

\address{Department of Mathematics,University of Hyderabad, Prof. C.~R.~Rao Road, 
Hyderabad 500046, India}

\email{sarchana.morye@gmail.com}

\author[T. Sengupta]{Tathagata Sengupta}

\address{Department of Mathematics,University of Hyderabad, Prof. C.~R.~Rao Road, 
Hyderabad 500046, India}

\email{tsengupta@gmail.com}

\subjclass[2010]{14H40, 14H81, 53C07}

\keywords{Symmetric product; Jacobian; Bergman kernel, Petersson inner product.}

\date{}

\begin{abstract}
Let $X$ be a compact connected Riemann surface of genus $g$, with $g\,
\geq\, 2$. For each $d\, <\,\eta(X)$, where $\eta(X)$ is the gonality
of $X$, the symmetric product $\text{Sym}^d(X)$ embeds into
$\text{Pic}^d(X)$ by sending an effective divisor of degree $d$ to the
corresponding holomorphic line bundle. Therefore, the restriction of
the flat K\"ahler metric on $\text{Pic}^d(X)$ is a K\"ahler metric on
$\text{Sym}^d(X)$. We investigate this K\"ahler metric on
$\text{Sym}^d(X)$. In particular, we estimate it's Bergman kernel. We
also prove that any holomorphic automorphism of $\text{Sym}^d(X)$ is
an isometry.
\end{abstract}

\maketitle

\section{Introduction}

Symmetric products of Riemann surfaces were studied by Macdonald
\cite{Ma}; he explicitly computed their cohomologies. Interests on
these varieties revived when it was realized that they constitute
examples of vortex moduli spaces \cite{Br1}, \cite{Br2}, \cite{Ga}.
One of the questions was to compute the volume, which was resolved in
a series of papers \cite{Na}, \cite{MN}, \cite{Pe}; see also \cite{Ba}
for K\"ahler structure on vortex moduli spaces.

Let $X$ be a compact connected Riemann surface of genus $g$, with $g\,
\geq\, 2$, and let $\eta(X)$ denote the gonality of $X$ (this means
that $X$ admits a nonconstant holomorphic map to ${\mathbb C}{\mathbb
P}^1$ of degree $\eta(X)$ and it does not have any smaller degree
nonconstant holomorphic map to ${\mathbb C}{\mathbb P}^1$). Take
any integer $1\, \leq\, d\, <\, \eta(X)$. Let

$$
\varphi\, :\, \text{Sym}^d(X)\,\longrightarrow\, \text{Pic}^d(X)
$$ 

be the map from the symmetric product that sends any $\{x_1,\,
\ldots\, ,x_d\}$ to the holomorphic line bundle ${\mathcal
O}_X(x_1+\cdots\, +x_d)$. We prove that $\varphi$ is an embedding.

The natural inner product on $H^0(X,\, K_X)$, where
$K_X\,\longrightarrow\, X$ is the holomorphic cotangent bundle,
produces a flat K\"ahler metric on $\text{Pic}^d(X)$. It is natural to
construct a metric on $\text{Sym}^d(X)$ by pulling back the flat
metric using the embedding $\varphi$; see \cite{Ri}, \cite{MR} (especially
\cite[p.~1137, (1.2)]{MR}, \cite[\S~7]{MR}). Our aim here is
to study this metric on $\text{Sym}^d(X)$. We prove that any
holomorphic automorphism of $\text{Sym}^d(X)$ is in fact an
isometry. Our main result is estimation of the Bergman kernel of the
metric.

Classically, the Bergman kernel which is the reproducing kernel for
$L^2$-holomorphic functions has been extensively studied in complex
analysis. The generalization of the Bergman kernel to complex
manifolds as the kernel for the projection onto the space of harmonic
$(p, q)$-forms with $L^2$-coefficients carries the information on the
algebraic and geometric structures of the underlying manifolds.

Using results from \cite{jkf} and \cite{jl}, we derive the following
estimate for $\bx(z)$, the Bergman kernel associated to the Riemann
surface $X$:
$$
\bx(z)\leq \frac{48}{\pi}+\frac{4}{3\pi\sinh^{2}(\rx\slash 4)}, 
$$
where $\rx$ denotes the injectivity radius of $X$.

We also study the above estimate for admissible sequences of compact
hyperbolic Riemann surfaces. Our estimates are optimal, and these
estimates continue to hold true for any compact hyperbolic Riemann surface.

\section{Comparison of K\"ahler metrics}

In this section, we introduce the hyperbolic and canonical metrics
defined on a compact hyperbolic Riemann surface. Furthermore, we
introduce the Bergman kernel, and derive estimates for it. We then
extend these estimates to admissible sequences of compact hyperbolic
Riemann surfaces.

\subsection{Canonical and hyperbolic metrics}\label{se2.1}

Let $X$ be a compact, connected Riemann surface of genus $g$, with
$g\,>\, 1$. Let
\begin{align*}
\mathbb{H}\,:=\,\lbrace z\,=\,x+\sqrt{-1}y\,\in\, \mathbb{C}\,\mid\,y\,>\,0 \rbrace 
\end{align*}
be the upper half-plane. Using the uniformization theorem $X$ can be
realized as the quotient space $\Gamma\backslash\mathbb{H}$, where
$\Gamma\,\subset\, \mathrm{PSL}_{2}(\mathbb{R})$ is a torsionfree
cocompact Fuchsian subgroup acting on $\mathbb{H}$, via fractional
linear transformations.

Locally, we identify $X$ with its universal cover $\mathbb{H}$ using
the covering map $\mathbb{H}\,\longrightarrow\, X$.

The holomorphic cotangent bundle on $X$ will be
denoted by $K_X$. Let $$\mathrm{Jac}(X)\,=\,\text{Pic}^{0}(X)$$ be the
Jacobian variety that parametrizes all the (holomorphic) isomorphism classes of topologically trivial
holomorphic line bundles on $X$. It is equipped with a flat K\"ahler metric
$g_J$ given by the Hermitian structure on $H^0(X,\, K_X)$ defined by
\begin{equation}\label{m0}
(\alpha\, , \beta)\, \longmapsto\, \frac{\sqrt{-1}}{2}\int_X \alpha\wedge \overline{\beta}\, .
\end{equation}
Fix a base point $x_0\, \in\, X$. Let
$$
{\rm AJ}_X\, :\, X\, \longrightarrow\, \mathrm{Jac}(X)
$$ 

be the Abel-Jacobi map that sends any $x\,\in\, X$ to the holomorphic
line bundle on $X$ of degree zero given by the divisor $x-x_0$. It is
a holomorphic embedding of $X$. The pulled back K\"ahler metric ${\rm
AJ}^*_X g_J$ on $X$ is called the {\it canonical metric}. The
$(1,1)$-form on $X$ associated to the canonical metric is denoted by
$\can$.

The canonical metric has the following alternate description. Let
$S_{2}(\Gamma)$ denote the $\mathbb{C}$-vector space of cusp forms of
weight-$2$ with respect to $\Gamma$. Let $\lbrace f_{1}\, ,\ldots\,
,f_{g}\rbrace $ denote an orthonormal basis of $S_{2}(\Gamma)$ with
respect to the Petersson inner product. Then, the $(1,1)$-form $\can(z)$
corresponding to the canonical metric of $X$ is given by
\begin{equation}\label{candefn1}
\can(z)\,:=\, \frac{\sqrt{-1}}{2g} \sum_{j=1}^{g}\left|f_{j}(z)\right|^{2}dz\wedge
d\overline{z}\, .
\end{equation}
The volume of $X$ with respect to the canonical metric is one.

Consider the hyperbolic metric of $X$, which is compatible with the
complex structure on $X$ and has constant negative curvature $-1$. We
denote by $\hyp$ the $(1,1)$--form on $X$ corresponding to it. The
hyperbolic form on $\mathbb H$ is given by

$$
\frac{\sqrt{-1}}{2}\cdot\frac{dz\wedge d\overline{z}}{{\Im(z)}^{2}}\, .
$$

So on $X$, the form $\hyp(z)$ is given by

$$
 \hyp(z)\,:=\, 
\frac{\sqrt{-1}}{2}\cdot\frac{dz\wedge d\overline{z}}{{\Im(z)}^{2}},
$$

for $z\in X$. The total volume $\vx(X)$ of $X$ with respect to the
hyperbolic metric $\hyp$ is given by the formula
\begin{equation*}
\vx(X) \,=\, 4\pi\big(g -1 \big)\, . 
\end{equation*}
Let
\begin{equation*}
 \shyp(z)\,:= \,\frac{\hyp(z)}{ \vx(X)}
\end{equation*}
denote the rescaled hyperbolic metric on $X$, which is normalized in such
a way that the volume of $X$ is one.

\subsection{Estimates of the Bergman kernel}\label{subsec2.2}

For any $z\,\in\, X$, the Bergman kernel $\bx$ associated to the
Riemann surface $X$ is given by the following formula
\begin{align*}
\bx(z)\,:=\,\sum_{j=1}^{g}y^{2}\left|f_{j}(z)\right|^{2}\, ,
\end{align*}
where $y\,=\, {\rm Im}\, z$.

The injectivity radius $\rx$ of $X$ is defined as 
\begin{align*}
 \rx\,:=\, \inf\big\lbrace{d_{\mathbb{H}}(z,\gamma z)\,\mid\, z\,\in\, \mathbb{H},
\, \gamma\in\Gamma\backslash\lbrace \mathrm{id}\rbrace \big\rbrace}\, ,
\end{align*}
where $d_{\mathbb{H}}(z,\gamma z)$ denotes the hyperbolic distance
between $z$ and $\gamma z$.

\vspace{0.2cm}

\noindent Let $f$ be any positive, smooth, real valued decreasing function defined
on $\mathbb{R}_{\geq 0}$. From \cite[Lemma 4]{jl}, for any
$\delta\,>\, \rx\slash 2$, and assuming that all the involved integrals
exist, we have the following inequality

$$
\int_{0}^{\infty}f(\rho)dN_{\Gamma}(z_{1},z_{2};\rho)\,\leq\,
\int_{0}^{\delta}f(\rho)dN_{\Gamma}(z_{1},z_{2};\rho)
$$

\begin{equation}\label{jlinequality}
+\, f(\delta)\frac{\sinh(\rx\slash
 2)\sinh(\delta)}{\sinh^{2}(\rx\slash 4)} +
\frac{1}{2\sinh^{2}(\rx\slash 4)}\int_{\delta}^{\infty}f(\rho)
\sinh(\rho+\rx\slash 2)d\rho\, ,
\end{equation}

where
\begin{align*}
N_{\Gamma}(z_{1},z_{2};\rho)\,:=\,\mathrm{card}\,\big\lbrace \gamma\,\mid\,\gamma\in\Gamma,\,d_{\mathbb{H}}(z_1,
\gamma z_2)\leq \rho \big\rbrace.
\end{align*}
Notice that the above injectivity radius $\rx$ is twice the injectivity radius
defined in \cite{jl}. 

\begin{thm}\label{boundbk}
For any $z\in X$, the following estimate holds:
$$
\bx(z)\,\leq\, B_X\, :=\, \frac{48}{\pi}+\frac{4}{3\pi\sinh^{2}(\rx\slash 4)}\, . 
$$
\end{thm}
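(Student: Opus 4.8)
The plan is to represent the Bergman kernel $\bx(z)$ as a Poincar\'e-type series summed over the group $\Gamma$ and then to feed this series into the counting inequality \eqref{jlinequality}. The starting point is an identity of the type supplied by \cite{jkf}, which, after unfolding the heat/spectral data of $X$ to its universal cover $\mathbb{H}$, expresses $\bx(z)$ in the form $\bx(z)\,=\,\sum_{\gamma\in\Gamma} f\big(d_{\mathbb{H}}(z,\gamma z)\big)$ for an explicit positive, smooth, decreasing function $f$ on $\mathbb{R}_{\geq 0}$ arising from the weight-two point-pair invariant. Rewriting this sum as a Stieltjes integral against the counting function gives $\bx(z)\,=\,\int_{0}^{\infty} f(\rho)\,dN_{\Gamma}(z,z;\rho)$, which is precisely the left-hand side of \eqref{jlinequality} specialized to $z_1\,=\,z_2\,=\,z$.

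First I would isolate the contribution of $\gamma\,=\,\mathrm{id}$. Since no nontrivial element of $\Gamma$ moves $z$ by less than the injectivity radius, the counting function $N_{\Gamma}(z,z;\rho)$ is constant, equal to $1$, for every $\rho\,<\,\rx$; hence the diagonal term contributes exactly $f(0)$ and the remaining mass of $dN_{\Gamma}(z,z;\cdot)$ is supported on $[\rx,\infty)$. This separation is what makes $f$ eligible for \eqref{jlinequality} and also explains the appearance of the injectivity radius in the final constant. It is at this point that the monotonicity and positivity of $f$, required as hypotheses of \eqref{jlinequality}, must be verified.

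Next I would apply \eqref{jlinequality} with this $f$ and a convenient threshold $\delta\,>\,\rx\slash 2$. The three terms on the right-hand side of \eqref{jlinequality} then yield, respectively: a bounded, $\rx$-independent contribution (the $\int_{0}^{\delta}$ piece together with $f(0)$), which after evaluating the elementary integrals produces the universal constant $48\slash\pi$; and the two tail terms, each carrying the factor $1\slash\sinh^{2}(\rx\slash 4)$, whose explicit evaluation of $f(\delta)\sinh(\rx\slash 2)\sinh(\delta)$ and of $\int_{\delta}^{\infty} f(\rho)\sinh(\rho+\rx\slash 2)\,d\rho$ produces the second summand $4\big\slash\big(3\pi\sinh^{2}(\rx\slash 4)\big)$. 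Fixing (or optimizing over) $\delta$ so that these pieces assemble into the stated bound $B_X$ is the final bookkeeping step.

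The main obstacle is twofold. First, one must pin down the exact kernel $f$ and confirm that it is genuinely positive and decreasing on all of $\mathbb{R}_{\geq 0}$, since this is exactly what allows \eqref{jlinequality} to be invoked; this requires care in passing from the weight-two Petersson/Bergman reproducing kernel to a genuine point-pair invariant of $d_{\mathbb{H}}(z,\gamma z)$, rather than the naive averaged kernel on $\mathbb{H}$. Second, the constants $48\slash\pi$ and $4\slash(3\pi)$ must be extracted cleanly from the elementary but delicate integrals weighted by $\sinh(\rho+\rx\slash 2)$; here the choice of $\delta$ and the controlled bounding of the $\rx$-dependence of the tail (so that the leading constant remains independent of $\rx$) is what makes the resulting estimate optimal. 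Once $f$ is fixed, the remaining integrations are routine.
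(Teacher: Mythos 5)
Your plan reproduces the paper's proof in essentially the same form: the paper invokes inequality (13) of \cite{jkf} to dominate $\bx(z)$ by a Poincar\'e-type series over $\Gamma$, majorizes the summand by the elementary positive decreasing function $\tfrac{16}{3\pi}e^{-2\rho}$, rewrites the sum as a Stieltjes integral against $N_{\Gamma}(z,z;\rho)$, and applies \eqref{jlinequality} with $\delta\,=\,3\rx\slash 4$, exactly as you outline. Two small corrections to your bookkeeping: the input from \cite{jkf} is an upper bound rather than an exact identity (which disposes of the ``point-pair invariant'' worry you raise), and the middle boundary term of \eqref{jlinequality} does \emph{not} contribute to the $4\slash\bigl(3\pi\sinh^{2}(\rx\slash 4)\bigr)$ summand --- after cancellation it is $O(e^{-3\rx\slash 4})$ and is absorbed, together with the identity contribution $16\slash(3\pi)$, into the universal constant $48\slash\pi$; only the final tail integral carries the $\sinh^{-2}(\rx\slash 4)$ factor.
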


\begin{proof}
Substituting $k\,=\,1$ in inequality (13) of \cite{jkf}, we arrive at
\begin{align}\label{boundbkeqn1}
\bx(z)\,\leq\, \frac{\sqrt{2}}{3\pi}\sum_{\gamma\in\Gamma}\frac{1}{\cosh^2(\rho_{\gamma,z}\slash 2)}\int_{\rho_{\gamma,z}}^{\infty}
\frac{ue^{-u\slash 2}}{\sqrt{\cosh(u)-\cosh(\rho_{\gamma,z})}}du\, ,
\end{align}
where $\rho_{\gamma,z}\,=\,d_{\mathbb{H}}(z,\gamma z)$. Using the fact that
$u\,\leq\,\sinh(u)$ for all $u\,\geq\, 0$, 
\begin{align}
\int_{\rho_{\gamma,z}}^{\infty}
\frac{ue^{-u\slash 2}}{\sqrt{\cosh(u)-\cosh(\rho_{\gamma,z})}}du \,\leq\,
\int_{\rho_{\gamma,z}}^{\infty}\frac{ue^{-u\slash 2}}{\sqrt{\cosh(u)-1}}du\notag\\
=\,\int_{\rho_{\gamma,z}}^{\infty}\frac{ue^{-u\slash 2}}{\sqrt{2\sinh^{2}(u\slash2)}}du
\,\leq\, 
\sqrt{2}\int_{\rho_{\gamma,z}}^{\infty}e^{-u\slash 2}du\,=\,
2\sqrt{2}e^{-\rho_{\gamma,z}}\, .\label{boundbkeqn2}
\end{align}
Combining \eqref{boundbkeqn1} and \eqref{boundbkeqn2}, and using the fact that the
inequality $\cosh(u)\,\geq\, e^{u}\slash 2$ holds for all $u\,\geq\, 0$, it follows that 
\begin{align*}
 \bx(z)\,\leq\, \frac{4}{3\pi}\sum_{\gamma\in\Gamma}
\frac{e^{-\rho_{\gamma,z}}}{\cosh^2(\rho_{\gamma,z}
 \slash 2)}\,\leq\, \frac{16}{3\pi}\sum_{\gamma\in\Gamma}
\frac{e^{-\rho_{\gamma,z}}}{e^{\rho_{\gamma,z}}}\,=\,\frac{16}{3\pi}\int_{0}^{\infty}
 e^{-2\rho}dN_{\Gamma}(z,\gamma z;\rho)\, .
\end{align*}
As $e^{-2\rho}$ is a monotonically decreasing function in $\rho\,\in\,
\mathbb{R}_{\geq 0}$, using \eqref{jlinequality} we compute
$$
\bx(z)\,\,\,\leq\,\, \,
\frac{16}{3\pi}\int_{0}^{\frac{3\rx}{4}}e^{-2\rho}dN_{\Gamma}(z,\gamma z;\rho)
$$
\begin{equation}\label{boundbkeqn3}
+
\frac{16e^{-\frac{3\rx}{2}}\sinh(\rx\slash 2)\sinh(3\rx\slash 4)}{3\pi\sinh^{2}(\rx\slash 4)}
+\frac{8}{3\pi\sinh^{2}(\rx\slash 4)}\int_{\frac{3\rx}{4}}^{\infty}
e^{-2\rho}\sinh\bigl(\rho+\frac{\rx}{2}\bigr)d\rho\,.
\end{equation}

From the definition of the injectivity radius $\rx$ we have 
\begin{equation}\label{boundbkeqn4}
\frac{16}{3\pi}\int_{0}^{\frac{3\rx}{4}}e^{-2\rho}dN_{\Gamma}(z,\gamma z;\rho)
\,=\, \frac{16}{3\pi}\,. 
\end{equation}
Using the fact that $\sinh(u)$ is a monotone increasing function and that
the inequality $\cosh(u)\,\leq \, e^{u}$ holds for all $u\,\geq\, 0$, we have the
following estimate for the second term on the right-hand side of inequality in
\eqref{boundbkeqn3}:
$$
\frac{16e^{-\frac{3\rx}{2}}\sinh(\rx\slash 2)\sinh(3\rx\slash 4)}{3\pi\sinh^{2}(\rx\slash 4)}
\,\leq\, 
\frac{16e^{-\frac{3\rx}{2}}\sinh(\rx\slash 2)\sinh(\rx)}{3\pi\sinh^{2}(\rx\slash 4)}
$$

\begin{equation}\label{boundbkeqn5}
\leq\,\frac{128e^{-\frac{3\rx}{2}}\cosh^2(\rx\slash 4)\cosh(\rx\slash 2)}{3\pi}\,
\leq \,\frac{128e^{-\frac{\rx}{2}}}{3\pi}\,\leq\, \frac{128}{3\pi}\,. 
\end{equation}

Using the fact that $$\sinh(u)\,\leq\, e^{u}\slash 2$$ for all $u\,\geq\, 0$,
we derive the following estimate for 
the third term on the right-hand side of the inequality in \eqref{boundbkeqn3}:

$$
\frac{8}{3\pi\sinh^{2}(\rx\slash 4)}\int_{\frac{3\rx}{4}}^{\infty}e^{-2\rho}
\sinh\bigl(\rho+\frac{\rx}{2}\bigr)d\rho
$$

\begin{equation}\label{boundbkeqn6}
\leq\,
\frac{4e^{\frac{\rx}{2}}}{3\pi\sinh^{2}(\rx\slash 4)}\int_{\frac{3\rx}{4}}^{\infty}e^{-\rho}d\rho
\,=\,\frac{4e^{-\frac{\rx}{4}}}{3\pi\sinh^{2}(\rx\slash 4)}
\,\leq\, \frac{4}{3\pi\sinh^{2}(\rx\slash 4)}\, .
\end{equation}

Now the theorem follows from \eqref{boundbkeqn4}, \eqref{boundbkeqn5}, and \eqref{boundbkeqn6}.
\end{proof}

Let $\lbrace X_{N}\rbrace_{N\in\mathcal{N}}$, indexed by
$\mathcal{N}\,\subseteq\, \mathbb{N}$, be a set of compact hyperbolic
Riemann surfaces. We say that the sequence is \emph{admissible} if it
is one of the following two types:
\begin{enumerate}
\item If $\mathcal{N}\,=\,\mathbb{N}$ and $N\,\in\,\mathcal{N}$, then
$X_{N+1}$ is a finite degree unramified cover of $X_{N}$.

\item Let $\mathcal{N} \,\subset\, \mathbb{N}$ be such that for each
$N \,\in \, \mathcal{N}$, the modular curves $X_{0}(N)$, $X_{1}(N)$,
$X(N)$, have genus $g\,>\,1$. We consider families of modular curves
$\lbrace X_{N} \rbrace_{N \,\in \,\mathcal{N}}$ given by
$$
\lbrace X_{0}(N) \rbrace_{N \in \mathcal{N}},\,\,\, \lbrace X_{1}(N) \rbrace_{N
\,\in\, \mathcal{N}},\,\,\, \lbrace X(N) \rbrace_{N \in \mathcal{N}}\, .
$$
\end{enumerate}
See \cite[p. 695--696, Definition 5.1]{jkcomp}.

Let $q_{_{\mathcal{N}}}\,\in\,\mathcal{N}$ be the minimal element of
the indexing set $\mathcal{N}$. So in Case (1), we have
$q_{_{\mathcal{N}}}\,=\,0$, while in Case (2), the integer
$q_{_{\mathcal{N}}}$ is the smallest prime in $\mathcal{N}$.

\begin{cor}\label{boundbkcor}
Let $\lbrace X_{N}\rbrace_{N\in\mathcal{N}}$ be an admissible sequence
of compact hyperbolic Riemann surfaces. Then, for all
$N\,\in\,\mathcal{N}$, the Bergman kernel $\bxn(z)$ is bounded by a
constant which depends only on the Riemann surface
$X_{q_{_{\mathcal{N}}}}$.
\end{cor}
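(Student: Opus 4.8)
The plan is to reduce the corollary to a single uniform lower bound on the injectivity radii, and then feed that bound into Theorem \ref{boundbk}. The first observation is that the upper bound
$$
B_X\,=\,\frac{48}{\pi}+\frac{4}{3\pi\sinh^{2}(\rx\slash 4)}
$$
is a \emph{strictly decreasing} function of $\rx$: the map $\rho\,\mapsto\,\sinh^{2}(\rho\slash 4)$ is increasing on $\mathbb{R}_{\geq 0}$, so its reciprocal is decreasing. Hence, if I can produce a positive constant $c\,=\,c(X_{q_{_{\mathcal{N}}}})$, depending only on the minimal member $X_{q_{_{\mathcal{N}}}}$, such that $\rxn\,\geq\, c$ for every $N\,\in\,\mathcal{N}$, then Theorem \ref{boundbk} applied to each $X_N$ gives at once
$$
\bxn(z)\,\leq\,\frac{48}{\pi}+\frac{4}{3\pi\sinh^{2}(\rxn\slash 4)}\,\leq\,\frac{48}{\pi}+\frac{4}{3\pi\sinh^{2}(c\slash 4)}\, ,
$$
which is a constant depending only on $X_{q_{_{\mathcal{N}}}}$, exactly as asserted. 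So the whole problem becomes one of bounding the injectivity radius from below uniformly in $N$.

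For Case (1) this lower bound is elementary and in fact I expect $\rxn$ to be non-decreasing in $N$. Writing $X_N\,=\,\Gamma_N\backslash\mathbb{H}$, the hypothesis that $X_{N+1}$ is a finite unramified cover of $X_N$ means that, after conjugation, the Fuchsian groups satisfy $\Gamma_{N+1}\,\subseteq\,\Gamma_N$. Since $\rxn$ is defined as the infimum of $d_{\mathbb{H}}(z,\gamma z)$ over $\gamma\,\in\,\Gamma_N\setminus\lbrace\mathrm{id}\rbrace$ and $z\,\in\,\mathbb{H}$, replacing the index set by the smaller set $\Gamma_{N+1}\setminus\lbrace\mathrm{id}\rbrace$ can only increase the infimum; thus $\rxn\,\leq\, r_{X_{N+1}}$. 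Iterating down to the base member $X_{q_{_{\mathcal{N}}}}\,=\,X_0$ gives $\rxn\,\geq\, r_{X_{q_{_{\mathcal{N}}}}}$ for all $N$, so one may take $c\,=\,r_{X_{q_{_{\mathcal{N}}}}}$ and conclude via the reduction above.

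For Case (2) the families $\lbrace X_0(N)\rbrace$, $\lbrace X_1(N)\rbrace$, $\lbrace X(N)\rbrace$ are not linearly ordered by a single chain of coverings, so the subgroup argument does not apply verbatim. Here I would instead invoke the structural properties of admissible sequences of modular curves recorded in \cite[p.~695--696, Definition~5.1]{jkcomp}: for these congruence families the length of the shortest closed geodesic, equivalently $\rxn$, is bounded below by a positive constant uniform in $N$, and this constant is governed by the smallest level $q_{_{\mathcal{N}}}$. This supplies the required $c\,=\,c(X_{q_{_{\mathcal{N}}}})$ and completes the argument. \textbf{The main obstacle is precisely this last step}: unlike the tower of Case (1), the congruence families lack a uniform covering relation, so the uniform lower bound on $\rxn$ is not a formal consequence of subgroup inclusion but must be extracted from the arithmetic of congruence subgroups as in \cite{jkcomp}; by contrast, the monotonicity of $B_X$ in $\rx$ and the covering estimate in Case (1) are routine.
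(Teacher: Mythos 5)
Your proposal is correct and follows essentially the same route as the paper: both reduce the corollary to a uniform positive lower bound on $\rxn$ in terms of $X_{q_{_{\mathcal{N}}}}$ (equivalently, an upper bound on $1/\rxn$), which the paper obtains for both cases by citing assertion (a) of \cite[Lemma 5.3]{jkcomp}, and then feed it into Theorem \ref{boundbk} using that $B_X$ blows up only as $\rx\to 0$. The only difference is cosmetic: you prove Case (1) directly via the subgroup inclusion $\Gamma_{N+1}\subseteq\Gamma_N$ (a valid argument) and defer only Case (2) to \cite{jkcomp}, whereas the paper invokes the cited lemma for both cases at once.
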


\begin{proof}
From Theorem \ref{boundbk}, we have
\begin{align}\label{corxeqn1}
\bxn(z)\, \leq\,B_{X_{N}}\,= O\left(\frac{1}{r^{2}_{X_{N}}}\right)\,.
\end{align}
Recall that injectivity radius $\rxn$ is equal to $\ell_{X_{N}}$, the
length of the shortest geodesic on $X_N$. From assertion (a) in
\cite[Lemma 5.3]{jkcomp} we know that for all $N\,\in\,\mathcal{N}$,
the number $\frac{1}{\rxn}$ is bounded by a number that depends only
on the Riemann surface $X_{q_{_{\mathcal{N}}}}$. Therefore, the
estimate \eqref{corxeqn1} completes the proof.
\end{proof}

\begin{rem}
In \cite{bergmanbounds}, B.-Y.~Chen and S.~Fu have also derived a
similar estimate for the Bergman kernel as in Corollary
\ref{boundbkcor}. However, their estimate is valid only for any
compact hyperbolic Riemann surfaces with injectivity radius greater
than or equal to $\log(3)$.
\end{rem}

\section{Cartesian product $\xd$}

In this section, we introduce the hyperbolic and canonical metrics defined over
the $d$-fold Cartesian product $\xd$ of $X$. We, then 
compute an estimate for the volume form associated to the canonical metric.

\subsection{Canonical and hyperbolic metrics}\label{se3.1}

Take $X$ as before. Let $\xd$ denote the $d$-fold Cartesian 
product $X\times \cdots\times X$. For each $1\, \leq\, i\,\leq\, d$, let
$$
p_i\, :\, X^d\,\longrightarrow\, X
$$
be the projection to the $i$-th factor. Define
$$
\hypxd\,=\, \sum_{i=1}^d p^*_i \hyp \ \ 
\text{ and }\ \ \shypxd \,=\, \sum_{i=1}^d p^*_i \shyp\, .
$$ 

We denote by $\shypvol$ the volume form associated to $\shypxd$. Note
that the total volume of $X^d$ with respect to $\shypvol$ is $1$,
because the total volume of $X$ with respect to $\shyp$ is $1$.

With respect to a local coordinate $z\,=\,(z_1\, ,\ldots\, ,z_d)$ on
$X^d$, where $z_i\,=\, x_i+\sqrt{-1}y_i$ are hyperbolic coordinates on
$X$, the hyperbolic volume form is given by
\begin{align*}
\shypvol(z)\,=\,\frac{1}{(\vx(X))^{d}}\bigwedge_{j=1}^{d}\frac{\sqrt{-1}}{2}\cdot
\frac{dz_{j}\wedge d\overline{z}_{j}}{y_{j}^{2}}\,=\,
\frac{1}{(4\pi(g-1))^{d}}\bigwedge_{j=1}^{d}\frac{\sqrt{-1}}{2}\cdot
\frac{dz_{j}\wedge d\overline{z}_{j}}{y_{j}^{2}}\, .
\end{align*}

The gonality of $X$ is defined to be the smallest among all positive
integers $m$ such that $X$ admits a nonconstant holomorphic map to
${\mathbb C}{\mathbb P}^1$ of degree $m$. The gonality of $X$ will be
denoted by $\eta(X)$. So $\eta(X)\,=\,2$ if and only if $X$ is
hyperelliptic.

We assume that $d\, <\, \eta(X)$.

Let $\text{Pic}^d(X)$ denote the component of the Picard group of $X$
that parametrizes all the holomorphic line bundles of degree
$d$. Consider the holomorphic map
\begin{equation}\label{phi}
\phi\, :\, X^d\, \longrightarrow\, \mathrm{Pic}^{d}(X)\, ,\ \ (x_1\, ,\ldots\, , x_d)\,
\longmapsto\, {\mathcal O}_X(x_1+\cdots +x_d)\, .
\end{equation}
Since $d\, <\, m$, it can be shown that the fibers of the above map
$\phi$ are zero dimensional. Indeed, if $$\phi((x_1, \ldots,x_d))\,=\,
\phi((y_1, \ldots,y_d))\, ,$$ the holomorphic line bundle ${\mathcal
O}_X(x_1+\cdots +x_d)$ has two nonzero sections given by the two
effective divisors $x_1+\cdots +x_d$ and $y_1+\cdots +y_d$. These two
sections can't be linearly independent because that would contradict
the assumption on $d$ that it is strictly smaller than
$\eta(X)$. Since two sections are constant multiples of each other, it
follows that $(x_1, \ldots,x_d)$ and $(y_1, \ldots,y_d)$ differ by a
permutation of $\{1\, ,\ldots\, ,d\}$. Therefore, we have the
following:

\begin{lem}\label{l1}
Any two points of $X^d$ lying in a fiber
of the map $\phi$ differ by a permutation of $\{1\, ,\ldots\, ,d\}$.
\end{lem}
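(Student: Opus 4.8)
The plan is to translate the condition on fibers of $\phi$ into a statement about effective divisors inside a single complete linear system, and then to invoke the gonality hypothesis $d\,<\,\eta(X)$ to force that linear system to reduce to one point. So I would first suppose that $(x_1,\ldots,x_d)$ and $(y_1,\ldots,y_d)$ lie in the same fiber of $\phi$, which means that the effective divisors $D_1\,=\,x_1+\cdots+x_d$ and $D_2\,=\,y_1+\cdots+y_d$ of degree $d$ satisfy $\mathcal{O}_X(D_1)\,\cong\,\mathcal{O}_X(D_2)\,=:\,L$, a holomorphic line bundle of degree $d$ on $X$. Each of $D_1$ and $D_2$ is the zero divisor of a nonzero holomorphic section of $L$; write $s_1,\,s_2\,\in\, H^0(X,\,L)\setminus\{0\}$ for sections with $\mathrm{div}(s_1)\,=\,D_1$ and $\mathrm{div}(s_2)\,=\,D_2$, each determined up to a nonzero scalar (the ratio of two sections with the same divisor is a nowhere-vanishing holomorphic function, hence constant on the compact connected surface $X$). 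The entire problem reduces to showing that $s_1$ and $s_2$ are proportional: indeed $\mathrm{div}(s_1)\,=\,\mathrm{div}(s_2)$ is precisely the equality $D_1\,=\,D_2$ of divisors, which says that the multisets $\{x_1,\ldots,x_d\}$ and $\{y_1,\ldots,y_d\}$ coincide, i.e. the two tuples differ by a permutation of $\{1,\ldots,d\}$.

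The key step, and what I expect to be the main obstacle, is to rule out that $s_1$ and $s_2$ are linearly independent, which I would do by contradiction using the gonality. If $s_1$ and $s_2$ were linearly independent, then $h^0(X,\,L)\,\geq\,2$, so the complete linear system $|L|$ would be a pencil of dimension at least $1$. Writing $|L|\,=\,F+|M|$, where $F$ is the fixed (base) part and $|M|$ is the base-point-free movable part, one has $\deg M\,\leq\,\deg L\,=\,d$ and $\dim|M|\,=\,\dim|L|\,\geq\,1$. The base-point-free pencil $|M|$ then defines a nonconstant holomorphic map $X\,\to\,\mathbb{P}^1$ of degree $\deg M\,\leq\,d$, which contradicts the definition of $\eta(X)$, since $d\,<\,\eta(X)$ means $X$ admits no nonconstant holomorphic map to $\mathbb{P}^1$ of degree at most $d$. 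The care needed here lies precisely in the bookkeeping of the base locus: one must verify that after removing $F$ the movable system $|M|$ is still at least one-dimensional, so that the resulting map is genuinely nonconstant, while simultaneously its degree has not exceeded $d$.

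Having reached a contradiction, $s_1$ and $s_2$ must be linearly dependent, hence proportional, and the conclusion follows as explained in the first paragraph. This is in fact the argument already sketched in the paragraph preceding the statement, phrased there in terms of the linear independence of the two effective-divisor sections; the only point requiring a fully rigorous treatment is the passage from a pencil on $L$ to a low-degree map $X\,\to\,\mathbb{P}^1$, which is exactly the content of the gonality bound $d\,<\,\eta(X)$.
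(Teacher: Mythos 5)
Your proof is correct and is essentially the paper's own argument, which appears in the paragraph immediately preceding the lemma: two points in the same fiber give two effective divisors in $|L|$, and the gonality hypothesis $d<\eta(X)$ forces the corresponding sections to be proportional, hence the divisors equal. The only difference is that you spell out the gonality step (splitting off the fixed part to get a base-point-free pencil of degree at most $d$), which the paper leaves implicit.
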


The variety $\text{Pic}^d(X)$ is a torsor for $\mathrm{Jac}(X)$, because any two
holomorphic line bundles of degree $d$ differ by tensoring with a unique holomorphic
line bundle of degree zero. Therefore, by fixing a point of $\text{Pic}^d(X)$ we may
identify $\mathrm{Jac}(X)$ with $\text{Pic}^d(X)$. Using this identification,
we get a K\"ahler metric on $\text{Pic}^d(X)$ given by the metric on $\mathrm{Jac}(X)$
constructed in \eqref{m0}. This metric on $\text{Pic}^d(X)$ will be denoted by
$g_d$. We note that $g_d$ does not depend on the choice of the point in
$\text{Pic}^d(X)$ used in identifying $\mathrm{Jac}(X)$ with $\text{Pic}^d(X)$.

The pullback $\phi^*g_d$ is the canonical metric on $X^d$, which we denote by $\canxd$.
The canonical metric degenerates along the divisor where two or more coordinates coincide
(where the action of the group of permutations of $\{1\, ,\ldots\, ,d\}$ is not free). In
Remark \ref{re1} we will see that this is precisely the locus where $\canxd$ degenerates.

As in Section \ref{se2.1}, let $\lbrace f_{1}\, ,\ldots\, ,f_{g}\rbrace $ be an
orthonormal basis of $S_{2}(\Gamma)$ with respect to the Petersson 
inner product. The (1,1)-form associated to the canonical metric $\canxd$ is given by
\begin{align}\label{eqn2}
\canxd\,=\,\frac{\sqrt{-1}}{2g^d}\sum_{j=1}^{g}\sum_{a,b=1}^{d}f_{j}(z_{a})
\overline{f_{j}(z_{b})}dz_{a}\wedge d\overline{z}_{b}\, .
\end{align}
The volume form associated to the canonical metric $\canxd$ measures the total
volume of $\xd$ to be one. 

For any $z=(z_{1}\, ,\ldots\, ,z_{d})\,\in\,\xd$, the Bergman kernel associated to $\xd$ is 
given by the formula
\begin{align*}
\bxd(z)\,=\, \prod_{i=1}^{d}\bx(z_i,w_i)\, . 
\end{align*}

\subsection{Estimates of $\canvol$}

In this subsection, using the estimate for the Bergman kernel $\bx(z)$ derived in Theorem 
\ref{boundbk}, we estimate $\canvol$, the volume form associated to the canonical metric 
$\canxd$.

\begin{thm}\label{boundvol}
For any $z\,\in\, \xd$, the following inequality holds:
\begin{align*}
\Bigg|\frac{\canvol(z)}{\shypvol(z)}\Bigg| \,\leq\,
(d!)^2\bigg(\frac{\vx(X)B_X}{g^{d-1}}\bigg)^d\, .
\end{align*}
\end{thm}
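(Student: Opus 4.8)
The plan is to express the ratio of volume forms explicitly in local coordinates, recognize the canonical volume form as a permanent/determinant-like expression involving the Bergman kernel data, and then bound it term-by-term using Theorem~\ref{boundbk}. Starting from the $(1,1)$-form $\canxd$ in \eqref{eqn2}, the associated volume form $\canvol$ is obtained by taking the $d$-th wedge power (up to the appropriate combinatorial normalization $1/d!$) of $\canxd$. Writing out $(\canxd)^{\wedge d}/d!$, the wedge product $\bigwedge_{a}(dz_a\wedge d\overline z_a)$ survives only when the indices $a,b$ in the double sum pair up into a product of $d$ factors covering all coordinates exactly once. This means $\canvol(z)$ is proportional to a sum over ways of matching the coordinate slots, which organizes into a \emph{permanent} of the $d\times d$ matrix $M_{ab}:=\frac{1}{g}\sum_{j=1}^g f_j(z_a)\overline{f_j(z_b)}$, times the base volume form $\bigwedge_a \frac{\sqrt{-1}}{2}dz_a\wedge d\overline z_a$.

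**Next I would** convert this into the ratio against $\shypvol$. Dividing by the hyperbolic volume form $\shypvol(z)=\frac{1}{(\vx(X))^d}\bigwedge_a \frac{\sqrt{-1}}{2}\frac{dz_a\wedge d\overline z_a}{y_a^2}$ cancels the coordinate differentials and produces a factor $(\vx(X))^d \prod_a y_a^2$. Thus
\begin{align*}
\Bigg|\frac{\canvol(z)}{\shypvol(z)}\Bigg|\,=\,\frac{(\vx(X))^d}{g^d}\,\Big|\mathrm{perm}\big(y_a y_b \textstyle\sum_{j} f_j(z_a)\overline{f_j(z_b)}\big)\Big|\,,
\end{align*}
where the absorbed factors of $y_a$ turn each diagonal-type entry into exactly the Bergman kernel $\bx(z_a)=\sum_j y_a^2|f_j(z_a)|^2$. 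The key structural point is that the permanent of this Gram-type matrix expands into $d!$ terms, each a product of $d$ entries of the form $y_a y_{\sigma(a)}\sum_j f_j(z_a)\overline{f_j(z_{\sigma(a)})}$ over permutations $\sigma$.

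**The main work** is then to bound each off-diagonal entry by diagonal ones. By Cauchy--Schwarz applied to the inner product $\langle g,h\rangle = \sum_j g_j \overline{h_j}$ on $\mathbb{C}^g$, each entry satisfies $\big|y_a y_b \sum_j f_j(z_a)\overline{f_j(z_b)}\big| \le \sqrt{\bx(z_a)}\sqrt{\bx(z_b)}\le B_X$ by Theorem~\ref{boundbk}. Since the permanent has $d!$ terms and each term is a product of $d$ entries each bounded by $B_X$, the permanent is bounded by $d!\,B_X^d$. Collecting the normalization constants gives
\begin{align*}
\Bigg|\frac{\canvol(z)}{\shypvol(z)}\Bigg|\,\leq\,\frac{(\vx(X))^d}{g^d}\cdot d!\,B_X^d\,.
\end{align*}

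**The remaining obstacle** is matching this to the stated bound $(d!)^2\big(\vx(X)B_X/g^{d-1}\big)^d$, which is weaker by a factor of $d!$ and carries $g^{(d-1)d}$ rather than $g^d$ in the denominator. I suspect the authors' normalization of $\canvol$ differs from the naive $(\canxd)^{\wedge d}/d!$ (they may omit the $1/d!$, contributing one extra $d!$, and they may use a looser bound that replaces the sharp Cauchy--Schwarz step with a crude entrywise bound producing the extra powers of $g$). Rather than reproduce their constants exactly, I would carry out the permanent expansion cleanly, bound each of the $d!$ terms by $(\vx(X)B_X/g)^d$ via Cauchy--Schwarz, and then relax to their stated inequality, which is implied since $g\ge 2$ forces $g^{d}\le g^{(d-1)d}$ only fails---so the genuine care is in tracking the correct power of $g$. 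The honest crux is therefore the combinatorial identification of $\canvol$ as a permanent and the Cauchy--Schwarz reduction of off-diagonal entries to the Bergman kernel; the constant bookkeeping is routine once that structure is in hand.
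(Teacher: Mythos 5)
Your overall strategy is the same as the paper's: expand the top exterior power of $\canxd$ in the coordinates $z_1,\ldots,z_d$, divide by $\shypvol$ so that the factors $y_k^2$ get absorbed into the cusp forms, and bound each resulting factor $y_a|f_j(z_a)|$ by $\sqrt{B_X}$ via Theorem~\ref{boundbk}. Your Cauchy--Schwarz step (summing over $j$ first, so that each matrix entry is bounded by $\sqrt{\bx(z_a)\bx(z_b)}\le B_X$) is in fact sharper than what the paper does: the paper applies the triangle inequality to all $g^d(d!)^2$ monomials of the expansion and bounds each by $B_X^d$, which is exactly where its factor $g^d$ (hence the $g^{d-1}$ rather than $g^d$ in the denominator) comes from. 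One cosmetic correction: reordering $\bigwedge_k dz_{\sigma(k)}\wedge d\overline z_{\tau(k)}$ into $\bigwedge_k dz_k\wedge d\overline z_k$ produces the signs $\mathrm{sgn}(\sigma)\mathrm{sgn}(\tau)$, so the coefficient you obtain is a \emph{determinant} of the Gram-type matrix, not a permanent; this does not hurt you, since both expand into $d!$ monomials of $d$ entries each, and your entrywise bound applies to either.

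The substantive gap is in the constants, and your final paragraph does not resolve it. Your normalization $M_{ab}=\frac1g\sum_j f_j(z_a)\overline{f_j(z_b)}$ is not the one in \eqref{eqn2}, which carries $\frac{1}{g^d}$ in front of the entire double sum. With your normalization you arrive at the bound $d!\,\bigl(\vx(X)B_X/g\bigr)^d$, and this does \emph{not} imply the stated inequality: for $d=3$, $g=2$ your bound is $\tfrac{6}{8}(\vx(X)B_X)^3$ while the theorem asserts $\tfrac{36}{64}(\vx(X)B_X)^3$, which is smaller, so your inequality cannot be ``relaxed'' to the theorem's. The sentence in which you try to compare $g^{d}$ with $g^{(d-1)d}$ is where this breaks down. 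If instead you use \eqref{eqn2} as written, the prefactor becomes $(\vx(X)/g^d)^d$ and your argument yields $d!\,\bigl(\vx(X)B_X/g^{d}\bigr)^d$ (or $(d!)^2\bigl(\vx(X)B_X/g^{d}\bigr)^d$ if, like the paper, one takes $\canvol=(\canxd)^{\wedge d}$ with no $1/d!$ while keeping $\shypvol=\prod_i p_i^*\shyp$). Either of these is at most $(d!)^2\bigl(\vx(X)B_X/g^{d-1}\bigr)^d$, so the theorem follows and your route actually improves its constant. In short: the structural ideas (determinant expansion, Cauchy--Schwarz reduction to $\bx$) are right and essentially those of the paper, but as written the proof does not close because the normalization of $\canxd$ is off by a power of $g$ and the final comparison of constants is asserted rather than checked.
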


\begin{proof}
For any $z\,=\,(z_1\, ,\ldots\, ,z_d)\,\in\,\xd$, the canonical volume form $\canvol$ is
given by 
$$
 \canvol(z)\,=\,
$$
$$
\Bigg(\frac{\sqrt{-1}}{2g^d}\Bigg)^d\sum_{\substack{j_1,\ldots ,j_d
\in\lbrace1,\ldots,g \rbrace\\
 \sigma,\tau\in S_d}}f_{j_1}(z_{\sigma(1)})\overline{f_{j_1}(z_{\tau(1)})}\cdots
 f_{j_d}(z_{\sigma(d)})\overline{f_{j_d}(z_{\tau(d)})}\bigwedge_{k=1}^{d}dz_{\sigma(k)}
\wedge d\overline{z}_{\tau(k)}\,=
$$
$$
\Bigg(\frac{\sqrt{-1}}{2g^d}\Bigg)^d
\sum_{\substack{j_1,\ldots ,j_d\in\lbrace1,\ldots,g
 \rbrace\\
\sigma,\tau\in S_d}} \mathrm{sgn}(\sigma)\mathrm{sgn}(\tau)f_{j_1}(z_{\sigma(1)})
 \overline{f_{j_1}(z_{\tau(1)})}\cdots f_{j_d}(z_{\sigma(d)})\overline{f_{j_d}(z_{\tau(d)})}
 \bigwedge_{k=1}^{d}dz_{k}\wedge d\overline{z}_{k}\, .
$$
Using the above expression, we observe that
$$
\Bigg|\frac{\canvol(z)}{\shypvol(z)}\Bigg|^2 =\Bigg(\frac{\vx(X)}{g^d}\Bigg)^{2d}
$$
$$
\times
\Bigg|\bigg(\prod_{k=1}^{d}y_{k}^2\bigg)\cdot\sum_{\substack{j_1,\ldots ,j_d\in\lbrace1,\ldots,g
 \rbrace\\ \sigma,\tau\in S_d}} \mathrm{sgn}(\sigma)\mathrm{sgn}(\tau)f_{j_1}(z_{\sigma(1)})
\overline{f_{j_1}(z_{\tau(1)})}\cdots f_{j_d}(z_{\sigma(d)})
\overline{f_{j_d}(z_{\tau(d)})} \Bigg|^{2}\, .
$$
Since the number of terms in the above summation are $(d!)^2g^d$, we arrive at the inequality
$$
\Bigg|\frac{\canvol(z)}{\shypvol(z)}\Bigg|^2\leq (d!)^4\Bigg(\frac{g\vx(X)}{g^{d}}\Bigg)^{2d}
$$
\begin{equation}\label{boundvoleqn1}
\times
\sup_{\substack{j_1,\ldots ,j_d\in\lbrace1,\ldots,g\rbrace\\ \sigma,\tau\in S_d,\,z\in\xd}}
 \Bigg|\bigg(\prod_{k=1}^{d}y_{k}^2\bigg)\cdot f_{j_1}(z_{\sigma(1)})
\overline{f_{j_1}(z_{\tau(1)})}\cdots f_{j_d}(z_{\sigma(d)})
 \overline{f_{j_d}(z_{\tau(d)})} \Bigg|^2\, .
\end{equation}
{}From Theorem \ref{boundbk}, we derive
$$
\sup_{\substack{j_1,\ldots ,j_d\in\lbrace1,\ldots,g\rbrace\\ \sigma,\tau\in S_d,\,z\in\xd}}
\Bigg|\bigg(
\prod_{k=1}^{d}y_{k}^2\bigg)\cdot f_{j_1}(z_{\sigma(1)})
\overline{f_{j_1}(z_{\tau(1)})}\cdots 
f_{j_d}(z_{\sigma(d)})\overline{f_{j_d}(z_{\tau(d)})} \Bigg|^2
$$
\begin{equation}\label{boundvoleqn2}
\leq\, \sup_{z\in\xd}\big(\bxd(z)\big)^2\leq \big(B_{X}\big)^{2d}\, .
\end{equation}
Combining the inequalities \eqref{boundvoleqn1} and \eqref{boundvoleqn2}, the proof is completed.
\end{proof}

\section{Singularities of the canonical metric on the symmetric product}

As before, take $d\, <\,\eta(X)$.
Let $S_d$ denote the group permutation of $\{1\, ,\ldots\, ,d\}$. It acts on $X^d$ by 
permuting the factors. Let $\symx$ denote the $d$-fold symmetric product of $X$. In other 
words, $\symx$ is the quotient of $X^d$ for the action of $S_d$.

The metric $\cansymx$ on $X^d$ is clearly invariant under the action of the group
$S_d$. Let us denote the push-forward of the canonical metric $\canxd$ onto $\symx$.

\begin{prop}\label{symmsing}
Consider the map $\phi\, :\, X^d\, \longrightarrow\, \mathrm{Pic}^{d}(X)$ in \eqref{phi}.
It factors through the quotient $X^d\, \longrightarrow\, X^d/S_d \,=\, {\rm Sym}^d(X)$.
The resulting map
$$
{\rm Sym}^d(X)\, \longrightarrow\, \mathrm{Pic}^{d}(X)
$$
is an embedding.
\end{prop}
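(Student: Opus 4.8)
The plan is to establish the statement in two logically independent pieces: first that $\phi$ descends to a well-defined map on the quotient $\mathrm{Sym}^d(X)$, and second that the descended map is an embedding, meaning it is injective with injective differential (an immersion) whose image carries the subspace topology. The descent is immediate from the construction of $\phi$: since $\phi(x_1,\ldots,x_d)\,=\,{\mathcal O}_X(x_1+\cdots+x_d)$ depends only on the effective divisor $x_1+\cdots+x_d$, which is by definition a point of $\mathrm{Sym}^d(X)$, the map is constant on the $S_d$-orbits. Hence by the universal property of the quotient $X^d\,\longrightarrow\,X^d/S_d$ there is a unique map $\overline{\phi}\,:\,\mathrm{Sym}^d(X)\,\longrightarrow\,\mathrm{Pic}^d(X)$ with $\overline{\phi}\circ q\,=\,\phi$, where $q$ is the quotient map.

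For injectivity of $\overline{\phi}$, I would invoke Lemma \ref{l1} directly. If two points of $\mathrm{Sym}^d(X)$ map to the same line bundle, lift them to points of $X^d$; these lifts lie in a common fiber of $\phi$, so by Lemma \ref{l1} they differ by an element of $S_d$, which means they represent the \emph{same} point of $\mathrm{Sym}^d(X)$. This is the heart of why the gonality hypothesis $d\,<\,\eta(X)$ matters, and it is already packaged in Lemma \ref{l1}. Since $\mathrm{Sym}^d(X)$ is compact (it is a continuous image of the compact $X^d$) and $\mathrm{Pic}^d(X)$ is Hausdorff, a continuous injective map is automatically a homeomorphism onto its image, so no separate topological argument is needed once injectivity and continuity are in hand.

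The substantive part is showing $\overline{\phi}$ is an immersion, i.e. that its differential is injective at every point of $\mathrm{Sym}^d(X)$, including along the diagonal locus where $q$ is ramified and $\mathrm{Sym}^d(X)$ could a priori interact badly with the smooth structure. I would argue locally: away from the diagonal, $q$ is a local biholomorphism and the relevant statement reduces to computing $d\phi$ on $X^d$, where the differential at $(x_1,\ldots,x_d)$ is identified via Serre duality with the evaluation/interpolation map $\bigoplus_i T_{x_i}X\,\longrightarrow\, H^0(X,K_X)^{*}$, whose injectivity is again controlled by the fact that no effective divisor of degree $\leq d<\eta(X)$ moves in a pencil. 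At the diagonal points I would use local coordinates on $\mathrm{Sym}^d(X)$ given by the elementary symmetric functions of the branch coordinates, and check that $\overline{\phi}$ expressed in these coordinates has injective differential; equivalently, one verifies that the composite $\phi\,=\,\overline{\phi}\circ q$ has the property that $\ker d\phi$ is exactly the tangent space to the $S_d$-orbit, so that $d\phi$ drops to an injective map on the quotient tangent space.

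The main obstacle I anticipate is precisely this behavior along the ramification divisor of $q$: one must confirm that $\mathrm{Sym}^d(X)$ is smooth there (true for symmetric products of curves) and that the elementary-symmetric-function coordinates interact correctly with $\phi$, so that a divisor with multiplicity, say $2x_1+x_3+\cdots$, still has the linear-system-based injectivity of the differential survive the passage to the quotient. The cleanest route is to phrase the injectivity of $d\overline{\phi}$ intrinsically in terms of $h^0$ and $h^1$ of the line bundle ${\mathcal O}_X(x_1+\cdots+x_d)$ together with the Riemann--Roch/geometric Riemann--Roch description of the image of $\mathrm{Sym}^d(X)$ in $\mathrm{Pic}^d(X)$, which automatically accounts for multiplicities and thereby bypasses the coordinate bookkeeping.
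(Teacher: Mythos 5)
Your descent and injectivity steps match the paper exactly (descent is formal; injectivity is Lemma \ref{l1}), and your observation that compactness of $\mathrm{Sym}^d(X)$ plus the Hausdorff property of $\mathrm{Pic}^d(X)$ upgrades a continuous injection to a homeomorphism onto its image is a point the paper leaves implicit. The divergence is in the immersion step, and one of your two proposed routes there would fail. You suggest verifying that $\ker d\phi$ equals the tangent space of the $S_d$-orbit so that $d\phi$ ``drops to an injective map on the quotient tangent space''; but at a point of the ramification divisor the orbit is finite, and the differential of the quotient map $q\colon X^d\to \mathrm{Sym}^d(X)$ is itself degenerate there (for $d=2$ the elementary symmetric functions have Jacobian determinant $z_1-z_2$), so $T_{\underline{x}}\mathrm{Sym}^d(X)$ is not a quotient of the tangent space of $X^d$ and the identity $d\phi=d\overline{\phi}\circ dq$ only constrains $d\overline{\phi}$ on the proper subspace $\mathrm{im}(dq)$. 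No computation on $X^d$ alone can settle the diagonal case. Your second, ``cleanest'' route is the right one and is essentially what the paper does: it identifies $T_{\underline{x}}\mathrm{Sym}^d(X)$ intrinsically as $H^0(X,Q(\underline{x}))$, where $Q(\underline{x})$ is the torsion sheaf $\mathcal{O}_X(D)/\mathcal{O}_X$ supported on $D$ with its multiplicities, and identifies $(d\varphi)(\underline{x})$ with the connecting homomorphism $\gamma\colon H^0(X,Q(\underline{x}))\to H^1(X,\mathcal{O}_X)$ of the sequence $0\to\mathcal{O}_X\to\mathcal{O}_X(D)\to Q(\underline{x})\to 0$. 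Injectivity of $\gamma$ is then equivalent to $h^0(\mathcal{O}_X(D))=1$, which is exactly the ``no pencil of degree at most $d<\eta(X)$'' fact you isolate, and is the Serre dual of your surjectivity criterion for the evaluation map $H^0(X,K_X)\to\bigoplus_i K_X|_{x_i}$ off the diagonal. The gain of the paper's packaging is uniformity: multiplicities are absorbed into the sheaf $Q(\underline{x})$, so no case split and no symmetric-function coordinates are needed.
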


\begin{proof}
If two elements $(x_1, \ldots,x_d)$ and $(y_1, \ldots,y_d)$ of $X^d$ lie in the same orbit
for the action of $S_d$ on $X^d$, then the line bundles
${\mathcal O}_X(x_1+\cdots +x_d)$ and ${\mathcal O}_X(y_1+\cdots +y_d)$ are isomorphic.
Hence $\phi$ descends to a morphism
\begin{equation}\label{vp}
\varphi\,:\, \symx \, \longrightarrow\, \mathrm{Pic}^{d}(X)\, .
\end{equation}
{}From Lemma \ref{l1} we know that $\varphi$ is injective. Therefore, it suffices
to show that $\varphi$ is an immersion.

Take any point $\underline{x}\, =\, \{x_1\, ,\ldots\, ,x_d\}\,\in\, \symx$.
The divisor $\sum_{i=1}^d x_i$ will be denoted by $D$. Let
$$
0\, \longrightarrow\, {\mathcal O}_X(-D)\, \longrightarrow\, {\mathcal O}_X
\, \longrightarrow\, Q'(\underline{x})\,:=\, {\mathcal O}_X/{\mathcal O}_X(-D)
\, \longrightarrow\, 0
$$
be the short exact sequence corresponding to the point $\underline{x}$.
Tensoring it with the line bundle ${\mathcal O}_X(-D)^*\,=\, {\mathcal O}_X(D)$ we get
the short exact sequence
$$
0\, \longrightarrow\, End({\mathcal O}_X(-D))\,=\,{\mathcal O}_X
\, \longrightarrow\, Hom({\mathcal O}_X(-D)\, ,{\mathcal O}_X)\,=\,
{\mathcal O}_X(D)
$$
$$
\longrightarrow\, Q(\underline{x})\,:=\,
Hom({\mathcal O}_X(-D)\, ,Q'(\underline{x}))\, \longrightarrow\, 0\, .
$$
Let
\begin{equation}\label{e1}
0\, \longrightarrow\, H^0(X,\, {\mathcal O}_X)\,\stackrel{\alpha}{\longrightarrow}\,
H^0(X,\, {\mathcal O}_X(D))\,\stackrel{\beta}{\longrightarrow}\, H^0(X,\,
Q(\underline{x}))\,\stackrel{\gamma}{\longrightarrow}\, H^1(X,\, {\mathcal O}_X)
\end{equation}
be the long exact sequence of cohomologies associated to this short exact sequence
of sheaves.

The holomorphic tangent space to $\symx$ at $\underline{x}$ is
$$
T_{\underline{x}}\symx\,=\, H^0(X,\, Q(\underline{x}))\, ,
$$
and the tangent bundle of $\text{Pic}^d(X)$ is the trivial vector bundle
with fiber $H^1(X,\, {\mathcal O}_X)$. The differential at $\underline{x}$
of the map $\varphi$ in \eqref{vp}
$$
(d\varphi)(\underline{x})\, :\, T_{\underline{x}}\symx\,=\, H^0(X,\,
Q(\underline{x}))\,\longrightarrow\, T_{\varphi(\underline{x})}\text{Pic}^d(X)\,=\,
H^1(X,\, {\mathcal O}_X)
$$
satisfies the identity
\begin{equation}\label{e2}
(d\varphi)(\underline{x})\,=\, \gamma\, ,
\end{equation}
where $\gamma$ is the homomorphism in \eqref{e1}.

Now, $H^0(X,\, {\mathcal O}_X)\,=\, \mathbb C$. In the proof of Lemma \ref{l1} we saw
that
$$
H^0(X,\, {\mathcal O}_X(D))\,=\, {\mathbb C}\, .
$$
Hence the homomorphism $\alpha$ in \eqref{e1} is an isomorphism. Consequently, $\beta$ in
the exact sequence \eqref{e1} is the zero homomorphism and $\gamma$ in \eqref{e1}
is injective.

Since $\gamma$ in \eqref{e1} is injective, from \eqref{e2} we conclude that
$\varphi$ is an immersion.
\end{proof}

\begin{rem}\label{re1}
Since $\varphi$ is an embedding, the metric $\cansymx$ on 
$\symx$ is nonsingular.
Therefore, the metric $\canxd$ on $X^d$ is singular exactly on the divisor where the
quotient map $X^d\, \longrightarrow\, \symx$ is ramified. We note that this ramification
divisor consists of all points of $X^d$ such that the $d$ points of $X$ are not distinct.
\end{rem}

\section{Automorphisms of $\text{Sym}^d(X)$}

Consider the nonsingular K\"ahler metric $\cansymx$ on $\symx$ (see Remark \ref{re1}).

\begin{thm}\label{th3}
Let $T\, :\, {\rm Sym}^d(X)\, \longrightarrow\, {\rm Sym}^d(X)$ be any
holomorphic automorphism. Then the pulled back K\"ahler form $T^*\cansymx$ coincides
with $\cansymx$. In particular, $T$ is a isometry for the metric $\cansymx$.
\end{thm}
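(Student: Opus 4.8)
The plan is to transport the whole question to the flat abelian variety $\mathrm{Pic}^{d}(X)$, where the metric $g_d$ lives, and there reduce the claim to a statement about the principal polarization. Recall from Proposition~\ref{symmsing} that $\varphi\colon\symx\to\mathrm{Pic}^{d}(X)$ in \eqref{vp} is a closed embedding and that, by construction, $\cansymx=\varphi^{*}\Omega$, where $\Omega$ denotes the translation-invariant $(1,1)$-form of the flat metric $g_d$. Since $H^{0}(\symx,\Omega^{1})=H^{0}(X^{d},\Omega^{1})^{S_{d}}=H^{0}(X,K_X)$ and $\varphi$ induces an isomorphism on $H^{1}$, the map $\varphi$ is the Albanese map of $\symx$ up to a translation, with $\mathrm{Alb}(\symx)=\mathrm{Jac}(X)$. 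So my first step is to invoke the universal property of the Albanese: the holomorphic map $\varphi\circ T$ into the abelian variety $\mathrm{Pic}^{d}(X)$ factors through $\varphi$, which produces an affine automorphism $A=t_{c}\circ L$ of $\mathrm{Pic}^{d}(X)$, a translation $t_{c}$ composed with a group automorphism $L$ of $\mathrm{Jac}(X)$, satisfying $\varphi\circ T=A\circ\varphi$.

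Granting this, I would compute $T^{*}\cansymx=(\varphi\circ T)^{*}\Omega=(A\circ\varphi)^{*}\Omega=\varphi^{*}A^{*}\Omega$, so that it suffices to prove $A^{*}\Omega=\Omega$. Both $A^{*}\Omega$ and $\Omega$ are closed translation-invariant $(1,1)$-forms, since an affine map carries invariant forms to invariant forms; on a complex torus such forms are harmonic and hence determined by their de Rham classes, and translations act trivially on cohomology. Therefore $A^{*}\Omega=\Omega$ is equivalent to $L^{*}[\Omega]=[\Omega]$ in $H^{2}(\mathrm{Pic}^{d}(X),\mathbb{R})=\wedge^{2}H^{1}(X,\mathbb{R})$. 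As $[\Omega]$ is exactly the principal polarization determined by the Hermitian form \eqref{m0}, and $L^{*}$ acts on $H^{2}$ as $\wedge^{2}(L^{*}|_{H^{1}})$, this is equivalent to the assertion that $L^{*}$ preserves the intersection form $Q$ on $H^{1}(X,\mathbb{Z})$, i.e.\ that $L$ respects the principal polarization. I would note here that this already yields the isometry statement as well: a $\mathbb{C}$-linear lattice automorphism preserving $Q$ automatically commutes with the complex structure and preserves the associated positive Hermitian form \eqref{m0}, hence is unitary.

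Everything thus reduces, via the isomorphism $\varphi^{*}\colon H^{1}(\mathrm{Pic}^{d}(X))\xrightarrow{\sim}H^{1}(\symx)$ under which $L^{*}$ is identified with $T^{*}$ on $H^{1}(\symx,\mathbb{Z})\cong H^{1}(X,\mathbb{Z})$, to showing that $T^{*}$ preserves $Q$. My argument for this would exploit that $T$ is a biholomorphism, hence an orientation-preserving homeomorphism of degree one: $T^{*}$ is an automorphism of the cohomology ring $H^{*}(\symx,\mathbb{Z})$ that fixes the fundamental class and preserves the Hodge decomposition. I would then recover $Q$ from these $T^{*}$-invariant data inside $H^{*}(\symx)$ — for example through Poincar\'e's formula $[\varphi(\symx)]=\Theta^{\,g-d}/(g-d)!$ combined with Macdonald's description of the ring $H^{*}(\symx,\mathbb{Z})$ — so that $Q$ is necessarily $T^{*}$-invariant.

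The hard part will be precisely this last point: establishing that the distinguished polarization class on $\symx$ (equivalently Macdonald's degree-two generator, or the class $\Theta^{\,g-d}$ of the image $\varphi(\symx)$) is preserved by $T^{*}$. When $d=g-1$ this is immediate, for then $\Theta^{\,g-d}=\Theta$ and $A$-invariance of $\varphi(\symx)$ forces $A^{*}\Theta=\Theta$ outright. For smaller $d$ one only obtains $(A^{*}\Theta)^{\,g-d}=\Theta^{\,g-d}$, and extracting $A^{*}\Theta=\Theta$ from this requires the finer multiplicative structure of $H^{*}(\symx)$, using the Hodge type and the positivity of the relevant classes to rule out the sign and the lower-order corrections in $\wedge^{2}H^{1}(X)$. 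This cohomological rigidity is the technical heart of the proof; once it is in place, the reductions above immediately give $A^{*}\Omega=\Omega$, hence $T^{*}\cansymx=\cansymx$ and the isometry claim.
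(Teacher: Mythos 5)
Your overall strategy is the same as the paper's: push everything to $\mathrm{Pic}^d(X)$ via the Albanese property of $\varphi$, obtain an affine automorphism $\widehat{T}$ with $\varphi\circ T=\widehat{T}\circ\varphi$, and reduce the theorem to the statement that $\widehat{T}$ preserves the principal polarization. Your observation that polarization-preservation alone already suffices (an affine map sends translation-invariant forms to translation-invariant forms, which are determined by their cohomology classes, and a $\mathbb{C}$-linear map preserving $E=\Im H$ preserves $H$) is correct and in fact streamlines the paper's argument, which instead invokes Weil's classification of polarization-preserving automorphisms of $\mathrm{Jac}(X)$ and checks isometry generator by generator.

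However, there is a genuine gap at exactly the point you flag yourself: you never actually prove that $\widehat{T}$ preserves the polarization. Your proposed route --- recover the intersection form $Q$ from $T^*$-invariant data in $H^*(\symx,\mathbb{Z})$ via Macdonald's ring structure and Poincar\'e's formula $[\varphi(\symx)]=\Theta^{g-d}/(g-d)!$ --- only yields $(\widehat{T}^*\Theta)^{g-d}=\Theta^{g-d}$ in $H^{2(g-d)}$, and you concede that extracting $\widehat{T}^*\Theta=\Theta$ from this is ``the technical heart'' which you do not supply. For $d<g-1$ (the typical case here, since $d<\eta(X)\leq \lfloor (g+3)/2\rfloor$) this extraction is a nontrivial rigidity statement about Hodge classes on $\mathrm{Jac}(X)$, not a formal consequence of $T^*$ being a degree-one ring automorphism. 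The paper closes precisely this hole by citing Fakhruddin's Torelli theorem for symmetric products \cite{Fa}, which asserts that any automorphism of $\mathrm{Pic}^d(X)$ induced by an automorphism of $\symx$ preserves the polarization. So your proposal is a correct reduction together with an honest placeholder where the paper's essential external input sits; as written it is not a complete proof.
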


\begin{proof}
Since $\varphi$ (constructed in \eqref{vp}) is the Albanese map for $\text{Sym}^d(X)$, there
is a holomorphic automorphism
$$
\widehat{T}\, :\, \text{Pic}^d(X)\, \longrightarrow\,\text{Pic}^d(X)
$$
such that
\begin{equation}\label{com}
\varphi\circ T\,=\, \widehat{T}\circ\varphi\, .
\end{equation}
{}From \cite{Fa} we know that
$\widehat{T}$ preserves the polarization on $\text{Pic}^d(X)$. A theorem due
to Weil says a holomorphic automorphism of $\mathrm{Jac}(X)\,=\, \text{Pic}^0(X)$ that preserves the
polarization is generated by the following:
\begin{itemize}
\item translations of $\text{Pic}^0(X)$,

\item automorphisms of $\text{Pic}^0(X)$ given by the holomorphic automorphisms of 
$X$, and

\item the inversion of $\text{Pic}^0(X)$ defined by $L\, \longmapsto\, L^*$.
\end{itemize}
(See \cite[Hauptsatz, p.~35]{We}.) But all these three types of automorphisms of 
$\text{Pic}^0(X)$ are isometries for the flat K\"ahler form on $\text{Pic}^0(X)$
constructed in \eqref{m0}. From this it follows immediately that
$\widehat{T}$ is an isometry for the flat K\"ahler form $g_d$ on 
$\text{Pic}^d(X)$ constructed in Section \ref{se3.1}. Since
$\widehat{T}$ is an isometry, from \eqref{com} it follows
immediately that $T^*\omega_d\,=\, \omega_d$.
\end{proof}

\section*{Acknowledgements}

We thank the referee for pointing out a reference. The second-named author wishes 
to thank the University of Hyderabad for hospitality while the work was carried 
out. He is supported by a J. C. Bose Fellowship.


\begin{thebibliography}{999}

\bibitem{Ba} J. M. Baptista, On the $L^2$-metric of vortex moduli spaces,
{\it Nuclear Phys. B} {\bf 844} (2011), 308--333. 

\bibitem{bergmanbounds} B.-Y.~Chen and S.~Fu, Stability of the Bergman kernel on a
tower of coverings, \textit{Jour. Diff. Geom.} (to appear), arXiv:1202.4371. 

\bibitem{Br1} S. B. Bradlow, Vortices in holomorphic line bundles over closed K\"ahler 
manifolds, \textit{Commun.\ Math.\ Phys.} \textbf{135} (1990) 1--17.

\bibitem{Br2} S. B. Bradlow, Special metrics and stability for holomorphic bundles with 
global sections, \textit{Jour. Diff. Geom.} \textbf{33} (1991), 169--213.

\bibitem{Fa} N. Fakhruddin, Torelli's theorem for high degree symmetric
products of curves, arxiv:0208180v1.

\bibitem{jkf} J. S.~Friedman, J.~Jorgenson, and J.~Kramer, Uniform sup-norm bounds on 
average for cusp forms of higher weights, arXiv:1305.1348.

\bibitem{Ga} O. Garc\'{\i}a-Prada, A direct existence proof for the vortex equations over 
a compact Riemann surface, \textit{Bull. London Math. Soc.} \textbf{26} (1994) 88--96.

\bibitem{jkcomp} J.~Jorgenson and J.~Kramer, Bounds on canonical Green's functions, 
{\it Compositio Math.} {\bf 142} (2006), 679--700.

\bibitem{jl} J.~Jorgenson and R.~Lundelius, Convergence of the heat kernel and the 
resolvent kernel on degenerating hyperbolic Riemann surfaces of finite volume, 
{\it Questiones Math.} {\bf 18} (1995), 345--363.

\bibitem{Ma} I. G. Macdonald, Symmetric products of an algebraic curve,
\textit{Topology} \textbf{1} (1962), 319--343.

\bibitem{MN} N. S. Manton and S. M. Nasir, Volume of vortex moduli spaces, {\it Comm. Math. 
Phys.} {\bf 199} (1999), 591--604.

\bibitem{MR} N. S. Manton and N. Rom\~ao, Vortices and Jacobian varieties, {\it Jour.
Geom. Phys.} {\bf 61} (2011), 1135--1155. 

\bibitem{Na} S. M. Nasir, Vortices and flat connections, {\it Phys. Lett.} {\bf 419},
(1998), 253--257.

\bibitem{Pe} T. Perutz, Symplectic fibrations and the abelian vortex equations, {\it Comm. 
Math. Phys.} {\bf 278} (2008), 289--306.

\bibitem{Ri} N. A. Rink, Vortices and the Abel-Jacobi map, {\it Jour. Geom. Phys.}
{\bf 76} (2014), 242--255.

\bibitem{We} A. Weil, Zum beweis des Torelli satzes, \emph{Nachr. Akad. Wiss. G\"ottingen 
Math.-Phys. Kl. II} {\bf 2} (1957) 33--53.

\end{thebibliography}
\end{document}